\newcommand{\N}{\mathbb{N}}                   
\newcommand{\Z}{\mathbb{Z}}                   
\newcommand{\Q}{\mathbb{Q}}                   
\newcommand{\R}{\mathbb{R}}                   
\newcommand{\RR}{\mathcal{R}}                 
\newcommand{\trdeg}{\mathrm{trdeg}}           
\newtheorem{theorem}{Theorem}[section]
\newtheorem{proposition}[theorem]{Proposition}
\newtheorem{lemma}[theorem]{Lemma}
\newtheorem{corollary}[theorem]{Corollary}
\theoremstyle{definition}
\newtheorem{definition}[theorem]{Definition}
\newtheorem{remark}[theorem]{Remark}
\numberwithin{equation}{section}
\begin{document}
\title[On polynomially bounded reducts of $\R_{\mathrm{an},\exp}$]{Two remarks on\\ polynomially bounded reducts\\ of the restricted analytic field\\ with exponentiation}

\author{Serge Randriambololona}

\address{S. Randriambololona\\ Laboratoire de Math\'ematiques, Universit\'e ́de Savoie, UMR 5127 CNRS\\ 73376 Le Bourget-du-Lac C\'edex, FRANCE}

\email{serge.randriambololona@univ-savoie.fr}

\subjclass[2010]{Primary 03C64; Secondary 32B20.}

\thanks{This work was pursued while the author was financially supported by the Institut de Recherche Math\'ematiques de Rennes (UMR 6625), the Laboratoire de Math\'ematiques de l'Universit\'e de Savoie (UMR 5127), and the ANR ``Singularit\'es de Trajectoires de Champs de Vecteurs Analytiques et Alg\'ebriques'' (ANR 11-BS01-0009).\\ 
}

\begin{abstract}
This article presents two constructions motivated by a conjecture of L. van den Dries and C. Miller concerning the restricted analytic field with exponentiation. The first construction provides an example of two o-minimal expansions of a real closed field that possess the same field of germs at infinity of one-variable functions and yet define different global one-variable functions. The second construction gives an example of a family of infinitely many distinct maximal polynomially bounded reducts (all this in the sense of definability) of the restricted analytic field with exponentiation.
\end{abstract}

\maketitle

\section{Introduction}

\label{sec:intro}
Properties of $\R_{\mathrm{an},\exp}$, the real exponential field with restricted analytic functions, have been widely studied since the mid-90's (starting with L. van den Dries and C. Miller's \cite{vdDMi94} and L. van den Dries, A. Macintyre, and D. Marker's \cite{vdDMacMar94}). 

Of particular interest are the properties of $\R_{\mathrm{an},\mathrm{Pow}}$, the real field with power functions and restricted analytic functions, which is a reduct, in the sense of definability\footnote
{Most definitions are not recalled in this Section, in order to make the introduction lighter. We assume that the reader is familiar with the terminology of model theory (see for instance \cite[Chapters 1-5]{bookPoiz00} of B. Poizat) and with o-minimality (see for instance \cite{bookvdD98} of van den Dries); less standard notions (such as what we mean by ``{\it in the sense of definability}'') are made precise in Section \ref{sec:gemvsfun} and \ref{sec:manymax}.},
of $\R_{\mathrm{an},\exp}$. In \cite{Mi94b}, C. Miller studies the theory of $\R_{\mathrm{an},\mathrm{Pow}}$ and proves, among other things, that $\R_{\mathrm{an},\mathrm{Pow}}$ is polynomially bounded (and in particular is a \emph{proper} reduct, in the sense of definability, of  $\R_{\mathrm{an},\exp}$).

In \cite{vdDMi96}, L. van den Dries and C. Miller conjecture that the structure $\R_{\mathrm{an},\mathrm{Pow}}$ is maximal among the polynomially bounded reducts of $\R_{\mathrm{an},\exp}$ (all this in the sense of definability).

\smallskip

An important partial answer is given independantly in \cite[Proposition 5.1]{So02} by R. Soufflet and in \cite[Corollary 2]{KuhKuh} by F.-V. Kuhlmann and S. Kuhlmann: they prove that if $\R_{\mathcal F}$ is a proper reduct, in the sense of definability, of $\R_{\mathrm{an},\exp}$ that is also an expansion, in the sense of definability, of  $\R_{\mathrm{an},\mathrm{Pow}}$, then $\R_{\mathcal F}$ and $\R_{\mathrm{an},\mathrm{Pow}}$ define the same subsets of $\R^2$. If $\R_{\mathrm{an},\mathrm{Pow}}$ is not maximal among the strict reducts of $\R_{\mathrm{an},\exp}$ (in the sense of definability), then a set witnessing this non-maximality needs to be of arity $\geq 3$.
\smallskip

As was noted by the author in \cite{Ra1}, two o-minimal expansions of the real field may define the same subsets of $\R^2$, while the first is a strict reduct, in the sense of definability, of the second. However this phenomenum can not appear in a saturated setting: \cite[Lemma 4.7.]{vdD98} of van den Dries insures that if $R_{\mathcal L_0}$ is a reduct of $R_{\mathcal L_1}$, each of the structure $R_{\mathcal L_0}$ and $R_{\mathcal L_1}$ being an $\omega$-saturated expansion of an o-minimal ordered group, and if the structures $R_{\mathcal L_0}$ and $R_{\mathcal L_1}$ define (with parameters) the same sets of arity $2$, then they define the same sets in any arity. 

Hence, if the maximality result for the collection of one variable functions established in \cite{KuhKuh} and \cite{So02} could be transfered from the real setting to an $\omega$-saturated setting, the correctness of the conjecture of van den Dries and Miller would follow.
\smallskip

In their original form, the results of \cite{KuhKuh} actually hold not only for expansions of the reals but also for $\omega$-saturated structures. Let $R_{\mathrm{an},\exp}$ be any model of the theory of $\R_{\mathrm{an},\exp}$ (in the language $\mathcal L_{\mathrm{an},\exp}$ with relational symbols for each subset of $\R^n$ definable in the real exponential field with restricted analytic functions) and let $R_{\mathrm{an},\mathrm{Pow}}$ be its reduct to the language $\mathcal L_{\mathrm{an},\mathrm{Pow}}$ (the sub-language of $\mathcal L_{\mathrm{an},\exp}$ with relational symbols for each subset of $\R^n$ definable in $\R_{\mathrm{an},\mathrm{Pow}}$).
Given a reduct $R_{\mathcal F}$ of $R_{\mathrm{an},\exp}$, let $H(R_{\mathcal F})$ denote the set of germs of at $+\infty$ of one-variable functions definable in $R_{\mathcal F}$ with parameters (the set $H(R_{\mathcal F})$ being viewed as a subset of (the Hardy field) $H(R_{\mathrm{an},\exp})$). \cite[Corollary 2]{KuhKuh} states that if $R_{\mathcal F}$ is a proper reduct of $R_{\mathrm{an},\exp}$ and if, at the same time, $R_{\mathcal F}$ is an expansion of $R_{\mathrm{an},\mathrm{Pow}}$, then $H(R_{\mathcal F})=H(R_{\mathrm{an},\mathrm{Pow}})$.
\smallskip

For two o-minimal structures over the reals, the local compactness of the real line insures the equivalence between the fact of having the same germs of one-variable functions at infinity and the fact of defining the same subsets of $\R^2$. It is therefore natural to wonder if this property still holds for structures over a general real closed field. 

The object of {\bf Section \ref{sec:gemvsfun}} is to show that this is not the case in general. We exhibit two o-minimal expansions of a common non-Archimedean real closed field that define the same germs at infinity of one variable functions, while not defining the same global one-variable functions. 
\smallskip

The results in {\bf Section \ref{sec:manymax}} are independent of those of Section \ref{sec:gemvsfun} but are also motivated by the conjecture of van den Dries and Miller; the techniques used in both Sections are, furthermore, similar. We show that there are many different maximal polynomially bounded reducts of $\R_{\mathrm{an},\exp}$: the maximality of $\R_{\mathrm{an},\mathrm{Pow}}$ remains open but there is no hope for $\R_{\mathrm{an},\mathrm{Pow}}$ to be the \emph{greatest element} among the polynomially bounded reducts of $\R_{\mathrm{an},\exp}$ (all this taken in the sense of definability).


\section{Germs versus functions}
\label{sec:gemvsfun}

In this Section, we present two o-minimal expansions of a non-Archimedean real closed field $\RR$, that define (with parameters) the same germs of one-variable functions at infinity but which do not define the same global functions in one variable.


\begin{definition}
\label{defi:structures}
A function $f:\R^n \to \R$ is said to be a restricted analytic function if there is a function $F$ analytic in a neighbourhood of $[0,1]^n$ such that $f(x)=F(x)$ for $x\in [0,1]^n$ and $f(x)=0$ for $x\notin [0,1]^n$. 
\end{definition}

Let $\RR$ be the field of Puiseux series ({\it i.e.} the direct limit of all the fields of formal Laurent series in $T^ {1/d}$ as $d$ ranges over $\N$). Considering $T$ as an infinitesimal, $\RR$ can be regarded as an ordered field extension of $\R$, the order on $\RR$ being defined by 
\[
(\zeta=\sum_{k=k_0}^{\infty}a_k T^{k/d} \wedge a_{k_0}> 0) \Leftrightarrow \zeta>0.
\]

Following \cite[Section 2.]{vdDMacMar94}, one can extend any restricted analytic function $f:\R \to \R$ to a function $\widetilde f: \RR \to \RR$ : let $U$ be an open neighbourhood of $[0,1]$ and $F:U\to \R$ be an analytic function such that $f|_{[0,1]}=F|_{[0,1]}$ and consider $\zeta \in \RR$;
\begin{itemize}
\item if $\zeta<0$ or $\zeta>1$, let $\widetilde{f}(\zeta):=0$;
\item if $0\leq \zeta \leq 1$, let $\widetilde{f}(\zeta)$ be the formal composite of $F_{a_0}$ and $\rho(\zeta)$ where
\begin{itemize}
\item $a_0$ is the constant coefficient of the development of $\zeta$, 
\item $F_{a_0}$ is the (converging) Taylor development of $F$ at $a_0$ (which exist since $0\leq a_0 \leq 1$), 
\item $\rho (\zeta)=\zeta-a_0$.
\end{itemize}
\end{itemize}
(It is possible to extend in a similar manner a restricted analytic function of several variables; we however only need the one-variable case in what follows.)

\begin{definition}
\label{defi:Rf}
Let $f:\R \to \R$ be a restricted analytic function and  $\widetilde f:\RR \to \RR$ be its extension to the fields of Puiseux series described above.

We will denote by $\R_f$ the structure \[
\R_f:=(\R;<,+,\cdot \, ,f)
\]
 and by $\RR_f$ the structure 
 \[
 \RR_f:=(\RR;<,+,\cdot \, ,\widetilde{f}).
 \]
\end{definition}

\begin{remark}
\label{rem:elementary}
The structure $\R_f$ is o-minimal. As noted in \cite[Corollary 2.11]{vdDMacMar94}, it is also an elementary substructure of $\RR_f$; that is, if $\phi(x_1,\ldots,x_n)$ is a first order logic formula in the language $\mathcal L_f=\{ <, +,\cdot \, ,f \}$ and $(a_1,\ldots,a_n) \in \R^n$, the property $\phi(a_1,\ldots, a_n)$ holds true when interpreted in $\R_f$ if and only the property $\phi(a_1,\ldots, a_n)$ holds true when interpreted in $\RR_f$.
\end{remark}

\begin{definition}
\label{defi:cut}
Let $\kappa$ be the generalized power series 
\[
\frac{1}{2}+\sum_{k=1}^{\infty} T^{k+\frac{1}{k}}.
\] 
For $\zeta \in \RR$, we will write 

\begin{itemize}
\item $\zeta < \kappa$ if $\zeta< \frac{1}{2}+\sum_{k=1}^K T^{k+\frac{1}{k}}$ for some $K\in \N$,
\item $\zeta > \kappa$ if $\zeta> \frac{1}{2}+\sum_{k=1}^K T^{k+\frac{1}{k}}$ for all $K\in \N$.
\end{itemize}

 This defines a Dedekind cut on $\RR$. 
 
We chose $\kappa$ so that the $1$-type over $\RR$ associated to this cut is not definable. In particular, if $\zeta\in \RR$ and  $\zeta<\kappa$ (respectively $\zeta>\kappa$), there is $\xi\in \RR$ such that $\zeta<\xi<\kappa$ ({\it resp.}  $\zeta>\xi>\kappa$).
\end{definition}

\begin{definition}
\label{defi:Rfk}
Let $\widetilde{f}:\RR \to \RR$ be as in Definition \ref{defi:Rf}. 
Under the notations of Definition \ref{defi:cut}, $\RR_{f|_{\kappa}}$ will denote the structure 
\[
\RR_{f|_{\kappa}}:=(\RR;<,+,\cdot,(\widetilde f|_{[0,a]})_{a<\kappa}), (\widetilde f|_{[b,1]})_{b>\kappa})).
\]

(Out of convenience, we identify any partial function $g:\RR \to \RR$ to a total function by setting $g(x)=0$ for $x$ outside of the original domain of $g$.) 

\end{definition}

We can now state the first result of this section :

\begin{proposition}
\label{prop:samegerms}
For any function $g:\RR\to \RR$ definable in $\RR_f$ (with parameters), there is a positive $\varepsilon\in \RR$ such that $g|_{(0,\varepsilon)}$ is definable in $\RR_{f|_{\kappa}}$.
\end{proposition}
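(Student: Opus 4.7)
The plan is to fix a formula $\phi(x,y,\bar p)$ in $\mathcal L_f$ with parameters $\bar p\in\RR$ defining the graph of $g$, and to show that for $x$ in a sufficiently small right-neighborhood $(0,\varepsilon)$ of $0$, every subterm of the form $\widetilde f(\sigma(x,y,\bar p))$ appearing in $\phi$ satisfies, at $y=g(x)$, one of the three possibilities $\sigma(x,g(x),\bar p)\in[0,a]$ with $a<\kappa$, or $\sigma(x,g(x),\bar p)\in[b,1]$ with $b>\kappa$, or $\sigma(x,g(x),\bar p)\notin[0,1]$ (with $a,b\in\RR$). Once this localization of the arguments of $\widetilde f$ is secured, I would modify $\phi$ into $\phi'$ by adding these conditions as extra conjuncts and, under them, replacing each $\widetilde f(\sigma)$ by the corresponding symbol $\widetilde f|_{[0,a]}(\sigma)$, $\widetilde f|_{[b,1]}(\sigma)$, or $0$. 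Then $\phi'$ lies in the language of $\RR_{f|_\kappa}$, implies $\phi$ trivially, and is implied by $\phi$ on $(0,\varepsilon)\times\RR$ because the extra conjuncts hold at $y=g(x)$; so $\phi'$ defines $g|_{(0,\varepsilon)}$ in $\RR_{f|_\kappa}$.

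The technical core is the claim that for any $\RR_f$-definable function $\tau:(0,\delta_0)\to\RR$ with parameters in $\RR$, one has $\lim_{x\to 0^+}\tau(x)\neq\kappa$ as cuts on $\RR$. By o-minimality of $\RR_f$ (inherited from the o-minimality of $\R_f$ via Remark \ref{rem:elementary}), $\tau$ is eventually monotone and continuous, so its right limit at $0$ exists as an element of $\RR\cup\{\pm\infty\}$ or as a Dedekind cut of $\RR$. Were it exactly $\kappa$, then (say with $\tau$ increasing to $\kappa$; the other case is symmetric) the cut $\kappa$ would be defined in $\RR_f$ by the formula $\exists x\in(0,\delta_0),\ \zeta<\tau(x)$ with parameters in $\RR$, contradicting the non-definability of the type of $\kappa$ over $\RR$ built into Definition \ref{defi:cut}. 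Applied to each of the finitely many one-variable functions $x\mapsto\sigma(x,g(x),\bar p)$ that arise as arguments of $\widetilde f$, this yields the required $\varepsilon$ after intersecting the finitely many resulting thresholds.

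The main obstacle is to actually expose the arguments of $\widetilde f$ as definable one-variable functions, which requires eliminating the quantifiers of $\phi$ and substituting $y=g(x)$. Because $\RR_f$ is o-minimal, definable Skolem functions are available: each $\exists z\,\psi$ can be replaced by $\psi(\,\cdot\,,s(\,\cdot\,))$ for a Skolem function $s$ definable in $\RR_f$ with parameters in $\RR$, and $y=g(x)$ itself arises as the Skolem function for $\exists y\,\phi(x,y,\bar p)$. One must then proceed by induction on the nesting depth of $\widetilde f$ inside $\phi$, starting from the innermost occurrences: localize each $\widetilde f(\tau_i)$ via the limit analysis, rewrite it using the appropriate $\widetilde f|_{[0,a_i]}$ or $\widetilde f|_{[b_i,1]}$ symbol, and then feed the resulting expression (now already in the language of $\RR_{f|_\kappa}$) into the next outer layer as a new definable one-variable function. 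The bookkeeping — making sure that all intermediate parameters remain in $\RR$, that the Skolem-function substitutions interact correctly with the localization at each level, and that one single $\varepsilon$ can be selected uniformly for the finitely many occurrences of $\widetilde f$ at every nesting depth — is the step that requires the most care.
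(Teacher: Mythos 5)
Your central localization idea is exactly the one the paper uses: along the graph of $g$, each quantity fed into $\widetilde f$ tends, as $x\to 0^+$, to an element of $\RR$ (by o-minimality of $\RR_f$ a definable monotone function cannot converge to the cut $\kappa$, since that cut would then be an endpoint of a definable subset of the line), and by the density of the cut on both sides one can then trap each such quantity in some $[0,a]$ with $a<\kappa$ or some $[b,1]$ with $b>\kappa$. That part of your argument is sound and matches the heart of the paper's proof.

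The gap is in the reduction of an arbitrary defining formula $\phi$ to a shape on which this localization can be run. Skolem functions only eliminate \emph{existential} quantifiers; if $\phi$ contains a universally quantified subformula $\forall z\,\psi$, the arguments $\sigma(x,y,z,\bar p)$ of $\widetilde f$ inside $\psi$ range over whole intervals of values of $z$ and will in general straddle the cut, so no choice of $\varepsilon$ localizes them and your rewriting step is unavailable there. Moreover, even in the existential case your induction regresses rather than terminates: the Skolem function $s$ you substitute is merely $\RR_f$-definable, i.e.\ given by another $\mathcal L_f$-formula of arbitrary quantifier complexity and arbitrary $\widetilde f$-nesting depth, so ``feeding it into the next outer layer'' does not decrease any complexity measure. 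The paper closes exactly this gap by first invoking the syntactic version of Gabrielov's theorem of the complement, which puts the graph of $g$ in the normal form $\pi(X)$ with $X$ a boolean combination of sign conditions on polynomials in the coordinates and in the derivatives $f^{(m)}$ applied \emph{only to single coordinates} --- purely existential, with no nesting. A single application of Definable Choice then produces a definable curve of witnesses $\zeta(x)$ whose coordinate images $L_i$ localize by your limit argument, and the graph over $(0,\varepsilon)$ is finally presented as the projection of $\widetilde X_{\beta}\cap \prod_{i} L_i$, a set quantifier-freely definable in $\RR_{f|_{\kappa}}$. Without the theorem of the complement (or an equivalent model-completeness input) your scheme does not go through; with it, the elaborate induction you describe becomes unnecessary.
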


Once this proposition established, we will need to choose $f$ so that $\RR_f$ defines strictly more sets than $\RR_{f|_{\kappa}}$ does.

\medskip
Recall the following definition from O. Le Gal's \cite{lG10} :

\begin{definition}
\label{defi:strongtranscendence}
A function $f:\R \to \R$ is said to be a strongly transcendental restricted $\mathcal C^{\infty}$ function if  $f(x)=0$ for all $x\notin [0,1]$ and $f(x)=F(x)$ for all $x\in [0,1]$ where 
\begin{itemize}
	\item $F:U\to \R$ is a $\mathcal C^{\infty}$ function in some neighbourhood $U$ of $[0,1]$ and
	\item given any tuple $x=(x_1, \ldots , x_n)$ of pairwise distinct elements of $U$, there exists a constant $C\in \N$ such that, for all $m\in \N$, the transcendence degree over $\Q$ of the $n(m+2)$-tuple 
	\[
	(x_1,\ldots,x_n,F(x_1),\ldots,F(x_n), \ldots , F^ {(m)}(x_1), \ldots ,F^ {(m)}(x_n))
	\]
	is higher than $n(m+2)-C$.
\end{itemize}
\end{definition}

Following \cite{lG10}, if $x$ denotes the $n$-tuple $(x_1,\ldots,x_n)$, the notation $j^ m_nF(x)$ denotes the $n(m+1)$-tuple $(F(x_1),\ldots,F(x_n), \ldots , F^ {(m)}(x_1), \ldots ,F^ {(m)}(x_n))$; the notation $\trdeg(x_1,\ldots,x_n)$ denotes the transcendence degree of $x$ over $\Q$.

\begin{proposition}
\label{prop:nof}
Under the notations of Definition \ref{defi:Rf} and \ref{defi:Rfk}, if $f$ is a restricted analytic function which is also a restricted strongly transcendental function then the function $\widetilde f$ is not definable in $\RR_{f|_{\kappa}}$.
\end{proposition}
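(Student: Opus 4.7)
The strategy is a proof by contradiction, combining a model-theoretic reduction with Le Gal's strong transcendence. Suppose $\widetilde f$ were definable in $\RR_{f|_{\kappa}}$ by a formula $\phi(x,y,\bar a)$ with parameters $\bar a\in\RR$. Only finitely many symbols $\widetilde f|_{[0,a_i]}$ and $\widetilde f|_{[b_j,1]}$ appear in $\phi$, and the non-definability of the cut (Definition \ref{defi:cut}) lets one interpolate, on each side, a single element of $\RR$ inside the gap; so there exist $c,d\in\RR$ with $\max a_i<c<\kappa<d<\min b_j$. After slightly enlarging $c$ and shrinking $d$ one may even take $c,d\in\R$ with $c<1/2<d$. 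Thus $\widetilde f$ ends up definable in the reduct
\[
\RR_{c,d}\;:=\;(\RR;\,<,+,\cdot,\widetilde f|_{[0,c]},\widetilde f|_{[d,1]}).
\]

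Invoking o-minimality of $\RR_{c,d}$ together with the local structure of definable one-variable functions in a non-Archimedean restricted analytic o-minimal expansion of a real closed field, one writes $\widetilde f(x)=t(x,\bar a)$ on some $\RR$-neighbourhood of $\kappa$, where $t$ is an explicit expression built from $+,-,\cdot,/$ and compositions with the two partial functions. Evaluating at $x=\kappa$ and expanding both sides as Puiseux series in $T$, the left-hand side is
\[
\widetilde f(\kappa)\;=\;F_{1/2}(\rho(\kappa))\;=\;\sum_{m\geq 0}\frac{F^{(m)}(1/2)}{m!}\,\rho(\kappa)^m,
\]
whereas every relevant subterm on the right unfolds to $\sum_n F^{(n)}(p)\rho(u(\kappa))^n/n!$ with $p=u(\kappa)_0$ the constant coefficient of the subterm value; the collection of such points $p$ forms a finite $S\subset[0,c]\cup[d,1]$, and by construction $1/2\notin S$. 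Since the coefficient of $T^{2m}$ in $\rho(\kappa)^m=\bigl(T^2+T^{5/2}+T^{10/3}+\cdots\bigr)^m$ is $1$ (and the valuations $k+1/k$ rule out interference from lower orders), the expansion is lower-triangular in the $F^{(m)}(1/2)$, and inverting yields that every $F^{(m)}(1/2)$ with $m\leq M$ lies in the field
\[
L_M\;:=\;\Q\bigl(\bar\alpha,\;F^{(n)}(p):p\in S,\;n\leq N(M)\bigr),
\]
where $\bar\alpha$ is the finite set of $\R$-coefficients of $\bar a$ entering the expansion up to exponent $2M$ and $N(M)=O(M)$.

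To conclude, apply Definition \ref{defi:strongtranscendence} to the tuple $S\cup\{1/2\}$: there is a constant $C$ with $\trdeg_{\Q}(S\cup\{1/2\}\cup j^{N(M)}_{|S|+1}F(S\cup\{1/2\}))\geq (|S|+1)(N(M)+2)-C$, while the trivial bound for the subtuple omitting $1/2$ is $|S|(N(M)+2)$; after adjoining $|\bar\alpha|$ extra parameters one gets $\trdeg_{L_M}(F^{(i)}(1/2):i\leq N(M))\geq N(M)+2-C-|\bar\alpha|$. But only $N(M)-M$ of these values can live outside $L_M$, so $N(M)-M\geq N(M)+2-C-|\bar\alpha|$, that is $M\leq C+|\bar\alpha|-2$, which fails for $M$ chosen large enough. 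The main obstacles lie at the second step and at the control of $|\bar\alpha|$ in the final count: producing the explicit local term representation relies on the local structure theory of definable functions over a non-Archimedean restricted analytic base, and keeping $|\bar\alpha|$ growing sub-linearly in $M$ requires a careful preliminary simplification of the defining formula; once both are granted, the remainder is Puiseux-coefficient bookkeeping and a single application of the strongly transcendental hypothesis to force an unbounded algebraic independence of the derivatives of $F$ at $1/2$.
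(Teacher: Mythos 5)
Your overall strategy (reduce to finitely many restrictions straddling the cut, then contradict strong transcendence via a transcendence-degree count) is in the right spirit, but the argument breaks at its very first reduction, and two later steps are also unsupported. The fatal step is ``after slightly enlarging $c$ and shrinking $d$ one may even take $c,d\in\R$ with $c<1/2<d$.'' This is false: the cut $\kappa=\frac12+\sum_k T^{k+1/k}$ sits infinitesimally above $\frac12$, so the element $c$ below the cut may already satisfy $c>\frac12$ (for instance $\frac12+T^2<\kappa$), and any real $c'\geq c$ then exceeds $\frac12$, hence exceeds $\kappa$ --- at which point $[0,c']\cup[d',1]$ covers $[0,1]$ and nothing is being restricted. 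Without real endpoints straddling $\frac12$, the set $S$ of constant coefficients of the evaluation points can perfectly well contain $\frac12$ itself, and your entire Puiseux-coefficient extraction of the $F^{(m)}(1/2)$ from jets of $F$ at points of $S\setminus\{1/2\}$ collapses. Two further gaps: (i) the claim that a definable function in $\RR_{c,d}$ is locally given by an explicit term $t(x,\bar a)$ in $+,-,\cdot,/$ and the two partial functions is a quantifier-elimination-type statement that you do not have (and acknowledge you do not have); and (ii) your own final inequality $M\leq C+|\bar\alpha|-2$ yields no contradiction, because $\bar\alpha$ collects the real coefficients of the Puiseux-series parameters up to order $T^{2M}$ and therefore grows \emph{linearly} in $M$, not sub-linearly.

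The paper sidesteps all three difficulties by never analysing the cut inside $\RR$ at all. It first proves a purely real statement (Lemma \ref{lem:nof}): for \emph{any} real $0<a<b<1$, the function $f$ is not definable from $f|_{[0,a]}$ and $f|_{[b,1]}$ over $\R$; this uses Gabrielov's theorem to present the graph of $f$ near a real point $c\in(a,b)$ as a projection of a manifold cut out by polynomials in the jets of the two restrictions, the chain rule to express $j_1^Df(c)$ rationally in a \emph{fixed} finite tuple of parameters together with $j^{D+m}f(\eta)$ at points $\eta\neq c$, and then the transcendence count (which works precisely because the parameter tuple does not grow with $D$). Proposition \ref{prop:nof} then follows by a syntactic manipulation: definability of $\widetilde f$ in $\RR_{f|_\kappa}$ is turned into the $\mathcal L_f$-sentence ``$\exists a\exists b\exists\beta\,(0<a<b<1)\wedge(\cdots)$'' true in $\RR_f$, and elementarity $\R_f\prec\RR_f$ transports it to $\R_f$, where Lemma \ref{lem:nof} gives the contradiction. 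Note that the condition $a<\kappa<b$ is deliberately discarded in the transfer; only $0<a<b<1$ survives, which is why the real lemma must be proved for arbitrary such $a,b$. If you want to salvage your approach, the transfer-to-$\R$ step is the missing idea.
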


\begin{remark}
Note that the assumption on $f$ made in the hypothesis of Proposition \ref{prop:nof} is non-vacuous:  \cite[Proposition 2.2.]{lG10} insures that there exist (many) restricted analytic, strongly transcendental functions.
\end{remark}

Propositions \ref{prop:samegerms} and \ref{prop:nof} imply as announced:

\begin{theorem}
There exists a pair of o-minimal expansions of a common non-Archimedean field that do possess the same set of germs at infinity of one-variable definable (with parameters) functions but do not possess the same set of global definable (with parameters) one-variable functions.
\end{theorem}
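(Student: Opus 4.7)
The plan is to take $f\colon\R\to\R$ to be a restricted analytic function that is also strongly transcendental in the sense of Definition \ref{defi:strongtranscendence} (such $f$ exist by the remark following Proposition \ref{prop:nof}), and to show that the pair $(\RR_f,\RR_{f|_{\kappa}})$ of expansions of the Puiseux-series field $\RR$ witnesses the statement. Both structures share the common non-Archimedean underlying real closed field $\RR$, so only o-minimality, equality of germs at $+\infty$, and inequality of global one-variable functions need to be verified.

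First I would check o-minimality of the two structures. The structure $\R_f$ is o-minimal, so by Remark \ref{rem:elementary} the elementarily equivalent $\RR_f$ is also o-minimal. The structure $\RR_{f|_{\kappa}}$ is, by construction, a reduct of $\RR_f$ in the sense of definability with parameters, since each basic function $\widetilde{f}|_{[0,a]}$ (for $a<\kappa$) and $\widetilde{f}|_{[b,1]}$ (for $b>\kappa$) is definable in $\RR_f$ using $a$, respectively $b$, as a parameter; reducts of o-minimal structures remain o-minimal.

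Next, for the difference at the level of global functions, $\widetilde{f}$ is trivially definable in $\RR_f$, while Proposition \ref{prop:nof} asserts that $\widetilde{f}$ is not definable in $\RR_{f|_{\kappa}}$; the global one-variable functions definable in the two structures therefore differ. For the equality of germs at $+\infty$, I would transport Proposition \ref{prop:samegerms} (which concerns agreement on an interval of the form $(0,\varepsilon)$) through the definable involution $x\mapsto 1/x$. Given any $g\colon\RR\to\RR$ definable in $\RR_f$ with parameters, the function $h(y):=g(1/y)$ is again definable in $\RR_f$ with parameters, so Proposition \ref{prop:samegerms} yields a positive $\varepsilon\in\RR$ such that $h|_{(0,\varepsilon)}$ is definable in $\RR_{f|_{\kappa}}$; equivalently, $g|_{(1/\varepsilon,+\infty)}$ is definable in $\RR_{f|_{\kappa}}$, so the germ of $g$ at $+\infty$ belongs to $H(\RR_{f|_{\kappa}})$. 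The reverse inclusion $H(\RR_{f|_{\kappa}})\subseteq H(\RR_f)$ is immediate because $\RR_{f|_{\kappa}}$ is a reduct of $\RR_f$.

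The two Propositions \ref{prop:samegerms} and \ref{prop:nof} do all the real work; at the level of the theorem itself there is no genuine obstacle, only the bookkeeping check that the change of variable $x\mapsto 1/x$ preserves definability with parameters, which it clearly does since this map is a definable bijection between $(0,+\infty)$ and itself in any expansion of $\RR$ as an ordered field.
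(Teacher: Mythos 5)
Your proposal is correct and follows the paper's own route: the paper derives the theorem directly from Propositions \ref{prop:samegerms} and \ref{prop:nof} applied to the pair $(\RR_f,\RR_{f|_{\kappa}})$ with $f$ restricted analytic and strongly transcendental. The extra bookkeeping you supply (o-minimality of the reduct, and the change of variable $x\mapsto 1/x$ converting germs at $0^+$ into germs at $+\infty$) is exactly what the paper leaves implicit.
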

\medskip

\begin{proof}[Proof of Proposition \ref{prop:samegerms}]

Let $g$ be definable in $\RR_f$ with some parameters $\beta\in \RR^p$.

Up to compositions with $\emptyset$-definable Nash bijection between $(0,1)$ and $\R$, we can find a $\emptyset$-definable function $G$ from $[0,1]^{p+1}$ to $\R$ such that $g(x)=\widetilde G(\beta,x)$, where $\widetilde G$ is the interpretation of $G$ in $\RR_f$ (see Remark \ref{rem:elementary}).

By the syntactic version of Gabrielov's Theorem of the complement (\cite[Corollary]{Gab}), there is some $q\in \N$, some set $X\subset [0,1]^p\times [0,1]^2\times [0,1]^q $ such that 
the graph of $G$ is $\pi (X)$ where $\pi$ denotes the projection on the first $p+2$ coordinate axes and such that $X$ is described by a finite boolean combination of formul\ae \ of the form 
\[
P(y_1,\ldots,y_{p+2+q},f(y_1),\ldots,f(y_{p+2+q}), \ldots , f^ {(m)}(y_1), \ldots ,f^ {(m)}(y_{p+2+q}))=0
\] and
\[
Q(y_1,\ldots,y_{p+2+q},f(y_1),\ldots,f(y_{p+2+q}), \ldots , f^ {(m)}(y_1), \ldots ,f^ {(m)}(y_{p+2+q}))>0
\]
for $P$ and $Q$ some polynomial with coefficients in $\Z$.

Let $\widetilde X$ be the interpretation of $X$ in $\RR_f$ and $\widetilde X_{\beta}$ be the fibre of $\widetilde X$ over $\beta$ for the projection $\RR^p\times \RR^2\times \RR^q \to \RR^2\times \RR^q$.

By Definable Choice (see \cite[Proposition 6.1.2.]{bookvdD98}), for $\varepsilon>0$ small enough, there is a definable function $\zeta:(0,\varepsilon)\to \widetilde X_{\beta}$ such that for all $0<x<\varepsilon$ one has $(x,g(x))=\pi' (\zeta (x))$ (where $\pi'$ denote the projection $\RR^p\times\RR^2\times \RR^q\to \RR^2$). Up to taking an even smaller $\varepsilon$, we can assume that each component $\zeta_i$ of $\zeta$ is continuous. If for each $1\leq i \leq 2+q$, we denote by $\xi_i=\lim_{s \to 0}\zeta(s)\in [0,1]$, we can further shrink $\varepsilon$ so that each set $L_i=\zeta_i((0,\varepsilon))$ is either a singleton or an open interval and its topological closure lies entirely in one side or the other of the cut $\kappa$ (the side depending on whether $\xi_i>\kappa$ or $\xi_i<\kappa$).

Let $\Gamma$ be the graph of $g_{|(0,\varepsilon)}$. We now have that \[
\Gamma=\pi'(\zeta((0,\varepsilon))\subset \pi'(\widetilde X_{\beta}\cap \prod_{i=1}^{2+q} L_i ) \subset \Gamma.
\]

Since, for each $i$, the topological closure of each $L_i$ lies in one side or the other of the cut $\kappa$, there is some $c_i$ such that, 
\begin{itemize}
\item either ($0\leq c_i <\kappa $ and $(\forall x\in \RR , (x\in L_i \rightarrow 0\leq x \leq c_i ))$)
\item or ($\kappa < c_i \leq 1$ and $(\forall x\in \RR , (x\in L_i \rightarrow c_i \leq x \leq 1))$).
\end{itemize}

The set $\widetilde X_{\beta}\cap \prod_{i=1}^{2+q} L_i$ being a boolean combination of sets of vanishing and sets of positivity of polynomial in the functions $(z_1,\ldots,z_{2+q})\mapsto z_i$ and $(z_1,\ldots,z_{2+q})\mapsto f_{|L_j}^ {(d)}(z_j)$ with coefficients in $\RR$, it is definable in $\RR_{f|_\kappa}$. It follows that $g_{|(0,\varepsilon)}$ is definable in $\RR_{f|_\kappa}$.
\end{proof}

Before proving Proposition \ref{prop:nof}, we need the following real version of it:

\begin{lemma}
\label{lem:nof}
Let $f:\R \to \R$ be a restricted analytic function. Assume furthermore that $f$ is a strongly transcendental restricted $\mathcal C^{\infty}$ function. Consider $(a,b)\in \R^2$ with $0<a<b<1$.

Then $f$ is not definable in the structure $(\R;\leq,+,\cdot \ , f|_{[0,a]}, f|_{[b,1]}))$. 
\end{lemma}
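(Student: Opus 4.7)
The plan is to argue by contradiction, mirroring the setup of the proof of Proposition~\ref{prop:samegerms} and then extracting the contradiction from the strong transcendence of $f$. Suppose $f$ is definable in $(\R;\le,+,\cdot\,,f|_{[0,a]},f|_{[b,1]})$ with parameters $\bar c \in \R^p$. Since this reduct of $\R_f$ is o-minimal and generated by $\mathcal{C}^\infty$ functions, the syntactic version of Gabrielov's theorem of the complement (as invoked in the proof of Proposition~\ref{prop:samegerms}) rewrites the graph of $f$, restricted to some open subinterval $I_0 \subset (a,b)$, as the projection of a set $X$ defined by a boolean combination of polynomial equalities and inequalities in the variables $x,y,z_1,\ldots,z_q$, whose non-polynomial contributions are values of $F^{(k)}|_{[0,a]}$ and $F^{(k)}|_{[b,1]}$ (for $k\le m_0$) evaluated at these variables.

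By definable choice and further shrinking, exactly as in the proof of Proposition~\ref{prop:samegerms}, produce an open interval $I \subset I_0$ and $\mathcal{C}^\infty$ definable sections $\zeta_1,\ldots,\zeta_q : I \to \R$ such that each image $\zeta_j(I)$ is either a singleton or an open interval whose closure is contained in one of $[0,a]$, $[b,1]$, or $\R\setminus[0,1]$, together with a polynomial identity
\[
P\bigl(x,f(x),\zeta_1(x),\ldots,\zeta_q(x),(F^{(k)}(\zeta_j(x)))_{k\le m_0,\,j\le q}\bigr) = 0 \qquad (x\in I),
\]
where $F^{(k)}(\zeta_j(x))$ is read as $0$ when $\zeta_j(x)\notin[0,1]$. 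Now fix $n$ points $x_1,\ldots,x_n\in I$ that are algebraically independent over $\Q(\bar c)$. The identity at $x=x_i$ together with its first $m$ derivatives in $x$ expresses each $F^{(k)}(x_i)$ for $k\le m$ as algebraic over the field generated over $\Q(\bar c)$ by $x_i$, $\zeta_j^{(\ell)}(x_i)$ and $F^{(k')}(\zeta_j(x_i))$ for $\ell \le m$ and $k' \le m_0+m$. This yields an upper bound on the transcendence degree of the tuple $(x_i,\zeta_j(x_i),F^{(k)}(x_i),F^{(k)}(\zeta_j(x_i)))_{k\le m,\,i\le n,\,j\le q}$ that grows linearly in $m$. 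For a generic choice of the $x_i$, the $N \le n(q+1)$ points $\{x_i\}\cup\{\zeta_j(x_i)\}$ are pairwise distinct and belong to the neighbourhood $U$ of $[0,1]$ where $F$ is $\mathcal{C}^\infty$, so the strong transcendence of $f$ supplies a matching lower bound $N(m+2)-C$ on the same transcendence degree.

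The main obstacle is to make these two bounds incompatible. The naive counting of the derivatives $\zeta_j^{(\ell)}(x_i)$ contributes a new transcendental per $(i,j,\ell)$, which can match the slope in $m$ coming from strong transcendence, so the contradiction is not immediate when $q\ge 1$. To close the gap one exploits that each $\zeta_j$ is itself definable in the reduct, so applying the Gabrielov--definable-choice reduction inductively to the $\zeta_j$ yields further algebraic constraints on the $\zeta_j^{(\ell)}(x_i)$ in terms of values of $F$ at points lying in $[0,a]\cup[b,1]$; and also that, as $n$ grows, the number of independent polynomial identities (one per $x_i$ and differentiation order) saturates the upper bound strictly faster than the number of genuinely new transcendentals. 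For $n$ and $m$ sufficiently large, this bookkeeping closes and yields a contradiction with strong transcendence, proving that $f$ cannot be definable in the asserted reduct.
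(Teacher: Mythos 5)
There is a genuine gap, and you have in fact located it yourself: the paragraph beginning ``The main obstacle'' concedes that the naive transcendence count does not close when $q\ge 1$, and the proposed repairs (re-applying Gabrielov plus definable choice to each $\zeta_j$ ``inductively'', and a claim that the identities ``saturate the upper bound strictly faster'' as $n$ grows) are assertions, not arguments. Re-running the reduction on the $\zeta_j$ only introduces a new layer of auxiliary sections with their own underived jets, so the process does not visibly terminate; and increasing the number of base points $n$ does not help, because each new point brings $q$ new sections' worth of derivatives along with it (moreover the constant $C$ in strong transcendence depends on the chosen tuple of points, so $n$ must be fixed before $m$ is sent to infinity). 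As written, the proof does not reach a contradiction.

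The missing idea is to use the stronger, geometric form of Gabrielov's theorem (\cite[Lemma 3]{Gab}) rather than the purely syntactic Corollary you borrow from the proof of Proposition~\ref{prop:samegerms}. That lemma presents the graph of $f$ near a point $c\in(a,b)$ as the one-to-one submersive projection of a one-dimensional analytic manifold $\Gamma$ cut out \emph{transversally} by hypersurfaces that are polynomial in the jets of $g(x)=f(ax)$ and $h(x)=f(x+b(1-x))$. Transversality plus the implicit function theorem then make your sections $\zeta_j$ analytic with \emph{all} derivatives at $c$ given by the chain rule as rational functions of the fixed data $(\beta,\gamma)$ and the jets of $g,h$ at the single fibre point $\gamma$; no new transcendentals $\zeta_j^{(\ell)}$ appear. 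One then works at the single point $c$ with the fixed finite set $\eta\subset[0,a]\cup[b,1]$ of images of the coordinates of $\gamma$: the relation $j_1^D f(c)=\Psi^D(a,b,\beta,\gamma,\eta,j^{D+m}f(\eta))$ forces $\trdeg$ of the combined tuple to grow at most like $s\,D$ (with $s=|\eta|$ fixed), while strong transcendence at the $s+1$ distinct points $c,\eta$ forces growth at least like $(s+1)D$, and these are incompatible for large $D$. Without the transversality input, the slope comparison you need simply is not available.
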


\begin{proof}[Proof of Lemma \ref{lem:nof}]
Suppose that $f$ is definable in $(\R;<,+,\cdot,f|_{[0,a]}, f|_{[b,1]})$ with some parameters. Let $g(x)=f(ax)$ and $h(x)=f(x+b(1-x))$.
By \cite[Lemma 3]{Gab}, we can find some $p\in \N$, a finite collection of subsets $X_{\nu}$ of $[0,1]^2\times [0,1]^p$ and a finite collection $V$ of points in $[0,1]^2\times [0,1]^p$ such that 
\begin{enumerate}
\item the graph of $g$ is the union of the projections on the first two coordinates of $V$ and of the $X_{\nu}$'s,
\item each $X_{\nu}$ is the intersection of the positivity set $P_\nu$ of a finite set $\Omega_{\nu}$ of functions, with the zero-set $Z_\nu$ of a finite set $\Theta_{\nu}$ of functions, where each function in $\Omega_{\nu}$ and $\Theta_{\nu}$ is given as a polynomial with real coefficients in the functions $(z_1,\ldots,z_{2+q})\mapsto z_i$, $(z_1,\ldots,z_{2+q})\mapsto g^ {(d)}(z_j)$ and $(z_1,\ldots,z_{2+q})\mapsto h^ {(e)}(z_k)$, 
\item for each $\nu$, the set $X_{\nu}$ is an analytic manifold of dimension $1$ given near each of its points by the transverse intersection of analytic hypersurfaces defined by each function in $\Theta_{\nu}$ and 
\item the projection on the first $2$ coordinates has full rank $1$ when restricted to each $X_{\nu}$.
\end{enumerate}

The projection of $V$ being finite, we can find  some $c\in \R$ and $\varepsilon>0$ such that $(c-\varepsilon,c+\varepsilon)\subset (a,b)$ and the set $\{(x,y)\in \R^ 2 ; c-\varepsilon<x<c+\varepsilon , y=f(x) \}$ is the image by the projection $\pi:[0,1]^2\times [0,1]^p \to [0,1]^2$ of an analytic manifold $\Gamma$ given on some open set $U\subset [0,1]^2\times [0,1]^p$ as the conjunction of $p+1$ transverse smooth hypersurfaces of the form
\begin{equation*}
\{z\in U \ ; \ P(z,j^{\ m}_{2+q}g(z),j^{\ m}_{2+q}h(z))\}
\end{equation*} 
for some polynomial $P$ and so that $\pi_{|\Gamma }$ is a one-to-one submersion between $\Gamma$ and the graph of the restriction of $f$ to $(c-\varepsilon,c+\varepsilon)$.

Let $\gamma$ be the preimage of $(c,f(c))\in \R^2$ by $\pi_{|\Gamma }$ and let $\beta$ be a tuple made of the coefficients involved in the different polynomials $P$ used to describe $\Gamma$ in $U$.

By the chain rule and an easy induction, we can find, for all $D\in \N$, a rational function $\Phi_D$ with rational coefficients such that 
\[
j_1^D f(c)=\Phi^ D (\beta ,\gamma , j_{n+p}^ {D+m} g(\gamma),j_{n+p}^{D+m}h(\gamma)) 
\]

Let $\eta$ be a $s$-tuple whose coordinates are all the different images of the coefficients of $\gamma$ by the map $x \mapsto ax$ and 
$x \mapsto x+b(1-x)$. 
Then  for all $D\in \N$ there is  a rational function $\Psi_D$ with rational coefficients such that  
\begin{equation}
\label{eqn:dependant}
j_1^D f(c)=\Psi^ D (a,b,\beta ,\gamma, \eta , j_{n+p}^ {D+m} f(\eta)) 
\end{equation}

Since $c\in (a,b)$, $c$ is not a coordinate of $\eta$. The function $f$ being strongly transcendental, there is $C\in \N$ such that for all $D\in \N$, 
\begin{eqnarray*}
(s+1)(D+1)-C &\leq& \trdeg (c,j^D_1f(c),\eta , j_s^ D f(\eta))\\
&\leq &	\trdeg (c,j^D_1 f(c),\eta ,j_s^{D+m} f(\eta), a, b, \beta ,\gamma).
\end{eqnarray*}
But by the inequality \ref{eqn:dependant},
\[
\trdeg (c,j^D_1f(c),\eta ,j_s^{D+m} f(\eta ),a,b,\beta ,\gamma)=\trdeg (c,\eta ,j_s^{D+m} f(\eta ),a,b,\beta ,\gamma)
\]
so that
\[
(s+1)(D+1)-C \leq s(D+m+1)+ \trdeg (c,\eta,a,b,\beta ,\gamma ).
\]
 
However, the latter inequality can not hold for large integers $D$ : this is a contradiction.

\end{proof}

\begin{proof}[Proof of Proposition \ref{prop:nof}]
Generalizing Lemma \ref{lem:nof} to $\RR$ is an easy syntactic manipulation.

Suppose for a contradiction that $f$ is definable in $\RR_{f|_{\kappa}}$. By finiteness of first order logic formul\ae, $f$ is definable in the structure $(\RR;\leq,+,\cdot \ , \widetilde f|_{[0,a]}, \widetilde f|_{[b,1]}))$ for some $a$ and $b$ in $\RR$ with $0<a<\kappa <b<1$. 

Let $\mathcal L_{f,g,h}$ be the expansion of the real ordered field language obtained by adding three extra functional symbol of arity $1$ (denoted, without ambiguity, $f$, $g$ and $h$) and $\mathcal L_{f}$ (respectively $\mathcal L_{g,h}$) be its reduct obtained by removing the symbols $g$ and $h$ ({\it resp.} the symbol $f$) and let $\RR_{f,g,h}$ be the $\mathcal L_{f,g,h}$-expansion of the real closed field $\RR$ in which $f$ (respectively $g$ and $h$) is interpreted by $\widetilde f$ ({\it resp.} $\widetilde f|_{[0,a]}$ and $\widetilde f|_{[b,1]}$). 

We then have
\[
\RR_{f,g,h} \models \exists \beta  \big( (y=f(x)) \leftrightarrow \phi_{g,h} (x,y,\beta) \big)
\]
where $\phi_{g,h}$ is an $\mathcal L_{g,h}$-formula.

We can add new existential quantifiers so that each atomic formula appearing in the formula $\phi_{g,h} (x,y,\beta)$ is either in the pure language of rings or is of one of the forms $v=g(u)$ or $v=h(u)$ for some variables $u$ and $v$.

Let $a$ and $b$ be two distinguished variables and let $\phi _f(x,y,a,b,\beta)$ be the $\mathcal L_f$ formula obtained by replacing  in $\phi_{g,h}(x,y,\beta)$
\begin{itemize}
\item each atomic formula of the form ``$ v=g(u)$'' by a formula of the form ``$(0\leq u \leq a \wedge v=f(u)) \vee v=0$'' and 
\item each atomic formula of the form ``$ v=h(u)$'' by a formula of the form ``$(b\leq u \leq 1 \wedge v=f(u)) \vee v=0$''.
\end{itemize}

Then 
\[
\RR_f \models \exists a \exists b \exists \beta  (0<a<b<1) \wedge \big( (y=f(x)) \leftrightarrow \phi_f (x,y,a,b,\beta) \big)
\]
and since $\R_f$ is an elementary substructure of $\RR_f$ (as noted in Remark \ref{rem:elementary}), 
\[
\R_f\models \exists a \exists b \exists \beta  (0<a<b<1) \wedge \big( (y=f(x)) \leftrightarrow \phi_f (x,y,a,b,\beta) \big)
\]
which contradicts Lemma \ref{lem:nof}.
\end{proof}

\begin{remark}
Note that the question of whether Hardy fields of germs at infinity of one variable functions determine the structure was asked with the hope of combining \cite[Lemma 4.7.]{vdD98} and \cite[Corollary 2]{KuhKuh}. In the example presented in this section, even though  we can easily replace $\RR_f$ by an $\omega$-saturated, $\kappa$ and $\RR_{f|_{\kappa}}$ are chosen precisely so that the structure $\RR_{f|_{\kappa}}$ is \emph{not} $\omega$-saturated.

Consider $\mathfrak R_{f,f|_{\kappa}}$ an $\omega$-saturated elementary expansion of the structure 
\[
(\RR;<,+,\cdot,\widetilde{f}, (\widetilde f|_{[0,a]})_{a<\kappa}), (\widetilde f|_{[b,1]})_{b>\kappa})).
\] 
No analogue of Proposition \ref{prop:samegerms} holds for the reducts $\mathfrak{R}_f$ and $\mathfrak{R}_{f|_{\kappa}}$ of $\mathfrak R_{f,f|_{\kappa}}$: there is a realisation $\chi \in \mathfrak R$ of the type $\kappa$ and the germ at $\chi$ of the realisation of $f$ is not the germ of a function definable in the structure $\mathfrak R_{f|_{\kappa}}$, precisely by the analogue of Proposition \ref{prop:nof}.
\end{remark}

\section{No greatest element}
\label{sec:manymax}

In this section, we show that there are infinitely many polynomially bounded structures $(\R _{\mathcal F _n})_{n\in \N}$ which are pairewise distinct maximal reducts of the restricted analytic field with exponentiation (all this in the sense of definability).

But first, let's make precise what we mean by ``{\it in the sense of definability}''.

\begin{definition}
\label{defi:defi}
Given two structures $\mathcal M_0=(M;\cdots )$ and  $\mathcal M_1=(M;\cdots )$ on the same universe $M$, we say that $\mathcal M_0$ is a (strict) reduct, in the sense of definability, of $\mathcal M_1$ (or that $\mathcal M_1$ is a (strict) expansion, in the sense of definability, of $\mathcal M_0$) if $\mathcal M_0$ defines, with parameters, (strictly) less sets than does $\mathcal M_1$. 

Note that the fact that $\mathcal M_0$ is a reduct, in the sense of definability, of $\mathcal M_1$ does not imply the fact that $\mathcal M_0$ is a reduct, in the classical sense, of $\mathcal M_1$; note also that $\mathcal M_0$ can be a strict reduct of $\mathcal M_1$ in the classical sense whithout being a strict reduct in the sense of definability.
\end{definition}

\begin{definition}
\label{defi:PB}
Recall that an expansion of the real field is said to be polynomially bounded if whenver $f$ is a one-variable definable function, $f(x)$ grows at most as fast as a polynomial function as $x$ goes to $+\infty$ ({\it i.e.} there is some $d\in \N$ such that $\exists M, (x>M\rightarrow |f(x)|\leq x^d)$). 
\end{definition}

Polynomial boundedness is an important dividing line among o-minimal expansions of the reals. The Growth Dichotomy Theorem of \cite{Mi94a} states that polynomial boundedness is a necessary and \emph{sufficient} condition for an o-minimal expansion of the real field not to define the exponential function. (Note that \cite{KarMac99} insures that given an o-minimal expansion of the real field, one can always expand it further by adding the exponential, while keeping o-minimality.)

Our main subject of study are following:

\begin{definition}
We denote by $\R_{\mathrm{an}}$ the expansion of the real field by all restricted analytic functions (see Definition \ref{defi:structures}), by $\R_{\mathrm{an},\exp}$ the expansion of $\R_{\mathrm{an}}$ by the exponential function and by $\R_{\mathrm{an},\mathrm{Pow}}$ the expansion of $\R_{\mathrm{an}}$ by all the power functions (functions $f_r:\R \to \R$ defined by $f_r(x)=x^r$ if $x> 0$, $f_r(x)=0$ if $x\leq 0$).
\end{definition}

The structure $\R_{\mathrm{an}}$ is o-minimal and polynomially bounded (following important results from A. Khovaskii, S. {\L}ojasiewicz and A. Gabrielov) and its expansion $\R_{\mathrm{an},\exp}$ is still o-minimal (as first proved in \cite{vdDMi94}). The structure $\R_{\mathrm{an},\mathrm{Pow}}$ is a strict reduct, in the sense of definability, of $\R_{\mathrm{an},\exp}$ but a strict expansion, in the sense of definability, of $\R_{\mathrm{an}}$  (by \cite{Mi94}).

As recalled in the introduction, van den Dries and Miller conjecture in \cite{vdDMi96} that $\R_{\mathrm{an},\mathrm{Pow}}$ is maximal among the polynomially bounded reducts of $\R_{\mathrm{an},\exp}$ (all this in the sense of definability).
\smallskip

Relying on results from \cite{lG10}, we prove the existence of an infinite collection of $(\R _{\mathcal F _n})_{n\in \N}$ of maximal polynomially bounded expansions of the real field which are strict reducts of $\R _{\mathrm{an},\exp}$  (all this in the sense of definability).

Recall first :

\begin{theorem}[{\cite[Theorem 1.2.]{lG10}}]
For each $f:\R\to \R$ strongly transcendental restricted $\mathcal C^{\infty}$ function, the structure $\R_f:=(\R;\leq,+,\cdot ,f)$ is o-minimal and polynomially bounded.
\end{theorem}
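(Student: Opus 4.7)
The plan is to establish o-minimality and polynomial boundedness separately, leveraging the strong transcendence hypothesis as the principal technical input. For o-minimality I would follow the template developed by Wilkie and adapted to quasi-analytic classes by Rolin--Speissegger--Wilkie, aiming at a model-completeness statement from which o-minimality of $\R_f$ follows. First I would isolate a class of \emph{basic sets}: subsets of some $[0,1]^n$ cut out by finitely many equalities and strict inequalities in polynomials whose arguments are the coordinates together with $f(x_i),f'(x_i),\ldots,f^{(m)}(x_i)$. The goal is to prove a Gabrielov-style theorem of the complement for these basic sets.

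The strong transcendence assumption supplies exactly the finiteness input such an argument needs. For any tuple of pairwise distinct points, the transcendence degree of the jet $j^m_nF(x)$ drops below the maximal possible value $n(m+2)$ by at most a constant $C=C(x)$ independent of $m$. This uniform defect bound plays the role that Noetherianity of convergent power-series rings plays in the analytic case: it forces any polynomial relation appearing generically on a basic set to be of bounded complexity, which in turn allows a descending-induction argument on dimension to show that the projections of basic sets have finitely many connected components and admit a Łojasiewicz-type regularity. Routine bookkeeping then promotes this into model completeness of $\R_f$ in an appropriate language of terms, and o-minimality of one-variable definable sets follows.

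For polynomial boundedness I would invoke Miller's Growth Dichotomy (cited in the excerpt): since $\R_f$ is o-minimal, it suffices to show that $\exp$ is not definable, and in fact that every one-variable definable germ at $+\infty$ is semialgebraic. This is clean once model completeness is in hand, because $f$ is supported in $[0,1]$: in a defining formula for a one-variable function $g$, for $x$ large enough every atomic subformula involving a derivative $f^{(k)}(\tau(x))$ with $\tau(x)\notin[0,1]$ evaluates to $0$, and by a cell decomposition one can arrange that for large $x$ the remaining basic sets are cut out by pure polynomial (in)equalities, leaving a semialgebraic description of the germ of $g$.

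The main obstacle is the quasi-analytic o-minimality step—converting the transcendence-defect bound into a usable theorem of the complement for the basic sets. In the analytic setting one has Weierstrass preparation, convergent rings, and Noetherianity at one's disposal; here none of these apply, and the entire argument must be rephrased in a purely differential-algebraic language that extracts geometric tameness from a bound on transcendence degrees of jets. Engineering the strong transcendence condition so that precisely this extraction succeeds is the technical core of Le Gal's paper, and any honest proof must at that point defer to the machinery developed there.
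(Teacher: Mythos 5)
This statement is not proved in the paper at all: it is quoted verbatim from Le Gal's work (\cite[Theorem 1.2.]{lG10}) and used as a black box, so there is no in-paper proof to compare your attempt against. Your o-minimality half is a reasonable description of the Gabrielov--Wilkie ``theorem of the complement'' strategy that Le Gal's argument follows, and you correctly identify the transcendence-defect bound as the substitute for Noetherianity; but as written it is a plan that explicitly defers the technical core to the cited machinery, so it cannot stand as a proof on its own.

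The polynomial boundedness half, however, contains a genuine error. You argue that because $f$ is supported in $[0,1]$, every atomic subformula $f^{(k)}(\tau(x))$ eventually evaluates to $0$ for large $x$, leaving a semialgebraic germ. This fails for two reasons: the terms $\tau(x)$ need not leave $[0,1]$ as $x\to+\infty$ (e.g.\ $\tau(x)=1/x$), and in any model-complete presentation the defining formula carries existentially quantified auxiliary variables ranging over $[0,1]$, on which $f$ and its derivatives act nontrivially no matter how large $x$ is. The argument as written proves too much: the function $\varepsilon(x)=e^{-1/x}$ on $(0,1]$, extended by $0$, is a restricted $\mathcal C^{\infty}$ function supported in $[0,1]$, yet $(\R;<,+,\cdot,\varepsilon)$ defines $x\mapsto 1/\varepsilon(1/x)=e^{x}$ for $x\geq 1$ and hence is not polynomially bounded --- this is exactly the function the paper exploits in Corollary \ref{cor:plenty} and the concluding Remark. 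Polynomial boundedness therefore cannot follow from compact support alone; it must use strong transcendence, which $\varepsilon$ fails spectacularly (the relation $\varepsilon'=\varepsilon/x^{2}$ collapses the transcendence degree of every jet to at most $2$). You need to rework this half so that the transcendence-defect hypothesis enters the growth estimate, or defer it, like the o-minimality half, to \cite{lG10}.
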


See Definition \ref{defi:strongtranscendence}; note that in this Section, contrary to Section \ref{sec:gemvsfun}, the function $f$  is not furthermore required to be restricted analytic.
\smallskip

Next result, also from \cite{lG10}, states that the set of strongly transcendent $\mathcal C^{\infty}$ functions is hard to avoid. Let $\mathcal A$ be the set of restrictions to $[0,1]$ of functions which are analytic in a neighbourhood of $[0,1]$, with radius of convergence $\geq 1$ at each point of  $[0,1]$. The norm $\| f \| = \sup_{x\in [0,1]} \frac{|F^{(k)}(x)|}{k!}$ (where $F$ is any analytic continuation of $f$ to an open neighbourhood of $[0,1]$) turns $\mathcal A$ into a Banach space. Let $\mathcal S$ denote the set of strongly transcendental restricted $\mathcal C^\infty$ functions.

\begin{proposition}[{\cite[Proposition 2.2.]{lG10}}]
\label{prop:co-meagre}
Consider $h$ any function admitting a $\mathcal C^{\infty}$ continuation to an open neighbourhood of $[0,1]$. Then the set $\mathcal A \cap (h+\mathcal S)$ is co-meagre in $\mathcal A$.
\end{proposition}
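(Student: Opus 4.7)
The plan is to establish the result by a Baire category argument, showing that the complement
\[
\mathcal B := \mathcal A \setminus (h + \mathcal S) = \bigl\{ f \in \mathcal A : f - h \text{ is not strongly transcendental on any neighbourhood of } [0,1] \bigr\}
\]
is meagre in $\mathcal A$.

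The first step is to cover $\mathcal B$ by a countable collection of simpler sets. If $f \in \mathcal B$, the failure of strong transcendence of $F := f - h$ produces some $n \in \N$ and an $n$-tuple $x = (x_1, \ldots, x_n)$ of pairwise distinct points (in a common neighbourhood $U$ of $[0,1]$) such that the sequence $m \mapsto n(m+2) - \trdeg(x, j^m_n F(x))$ is unbounded. Specializing to the constant $C = n$, one obtains some $m$ with $\trdeg(x, j^m_n F(x)) \leq n(m+1) - 1$; equivalently, the vanishing ideal at $(x, j^m_n F(x))$ contains $n+1$ polynomials $P_1, \ldots, P_{n+1} \in \Z[z_1, \ldots, z_n, y_1, \ldots, y_{n(m+1)}]$ cutting out an algebraic variety of codimension $n+1$ (algebraically independent, in the sense that their common zero set has the expected codimension). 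Enumerating the countably many choices of $n$, $m$, and such tuples $\mathbf P = (P_1, \ldots, P_{n+1})$ with integer coefficients gives
\[
\mathcal B \subseteq \bigcup_{n,\, m,\, \mathbf P} V_{n, m, \mathbf P},
\]
where
\[
V_{n, m, \mathbf P} := \bigl\{ f \in \mathcal A : \exists\, x \in U^n \text{ with distinct entries}, \; P_i(x, j^m_n(f - h)(x)) = 0 \text{ for } i = 1, \ldots, n+1 \bigr\}.
\]

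It then suffices to show that each $V_{n, m, \mathbf P}$ is nowhere dense in $\mathcal A$. Closedness (modulo an induction on $n$ to absorb the boundary cases where some entries of $x$ collide) follows from the continuity of $f \mapsto j^m_n(f - h)(x)$ jointly in $(f, x)$ together with the compactness of $[0,1]^n$. Emptiness of the interior is the heart of the argument and I would establish it by parametric transversality. Given $f_0 \in \mathcal A$, perturb by polynomials $p_{\mathbf a}(X) = \sum_{k=0}^{D} a_k X^k$ for $D \geq n(m+1) - 1$ and $\mathbf a \in \R^{D+1}$ of arbitrarily small norm, and consider the smooth map
\[
\Psi : \{x \in U^n : \text{distinct entries}\} \times \R^{D+1} \longrightarrow \R^{n+1}, \qquad (x, \mathbf a) \longmapsto \bigl( P_i(x, j^m_n(f_0 + p_{\mathbf a} - h)(x)) \bigr)_{i = 1}^{n+1}.
\]
At distinct $n$-tuples, the Hermite interpolation map $\mathbf a \mapsto j^m_n p_{\mathbf a}(x)$ surjects onto $\R^{n(m+1)}$; combined with the algebraic independence of the $P_i$'s (whose Jacobian has full rank $n+1$ on a Zariski-open subset of the jet space), this shows that $\Psi$ is a submersion along $\Psi^{-1}(0)$ away from a lower-dimensional locus. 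Hence $\Psi^{-1}(0)$ has dimension at most $(n + D + 1) - (n+1) = D$, and Sard's theorem applied to its projection onto $\R^{D+1}$ yields a projection of Lebesgue measure zero, with empty interior. Any $\mathbf a$ of small norm outside this projection gives a polynomial perturbation $f = f_0 + p_{\mathbf a} \in \mathcal A$ lying in the complement of $V_{n, m, \mathbf P}$.

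The main obstacle I anticipate is making the transversality argument rigorous: one must carefully treat the boundary of the open locus of distinct $n$-tuples (collisions reduce to lower-$n$ cases handled by induction) and the non-generic locus where the Jacobian of $\mathbf P$ drops rank (a proper algebraic subvariety whose preimage under $\Psi$ remains lower-dimensional and projects to a meagre subset of the parameter space). Once these technicalities are addressed, $\mathcal B$ is exhibited as a countable union of closed nowhere-dense sets, and $\mathcal A \cap (h + \mathcal S)$ is co-meagre.
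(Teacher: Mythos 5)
This proposition is not proved in the paper at all: it is imported verbatim from Le Gal's work (cited as \cite[Proposition 2.2.]{lG10}), so there is no in-paper argument to compare yours against. Judged on its own merits, your reconstruction follows the standard (and, as far as the source goes, the actual) route: cover the complement of $h+\mathcal S$ by countably many sets indexed by $n$, $m$ and tuples of integer polynomials witnessing the transcendence-degree drop, then kill each one by a polynomial perturbation whose jet map at distinct points is surjective, plus a dimension count. Your numerical reduction (taking $C=n$ to force $\trdeg \leq n(m+1)-1$, hence $n+1$ independent relations) is correct, and the Sard step is sound --- indeed it is cleaner to observe that $(x,\mathbf a)\mapsto (x, j^m_n(f_0+p_{\mathbf a}-h)(x))$ is a submersion onto the full jet space, so the preimage of the codimension-$(n+1)$ variety $Z(\mathbf P)$ has dimension $D$ and projects to a null set of parameters, with no need to discuss where the Jacobian of $\mathbf P$ degenerates. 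The one real soft spot is the closedness of $V_{n,m,\mathbf P}$: as defined, the witness tuple ranges over a non-compact open set, and your proposed ``induction on $n$'' for colliding witnesses is not automatic, since after identifying coordinates the $n+1$ relations may become trivial or fail to have the right codimension for a smaller tuple. The standard repair is to refine the countable covering by additionally indexing over a rational lower bound $\varepsilon$ on the pairwise gaps $|x_i-x_j|$ and a fixed compact neighbourhood of $[0,1]$; each refined set is then genuinely closed by uniform convergence of jets on compacta, the covering of the complement is unaffected (every concrete witness has some positive minimal gap), and the same perturbation argument gives empty interior. With that adjustment your proof goes through.
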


As a corollary, we get:

\begin{corollary}
\label{cor:plenty}
Let $\varepsilon:[0,1]\to \R$ be the function defined by $\varepsilon(x)=e^{-1/x}$ if $0<x\leq 1$ and $\varepsilon(0)=0$. There is a function $f\in \mathcal A$ such that for all $n\in \N$, the function $f_n: x\mapsto f(x)+n\varepsilon(x)$ is a strongly transcendental restricted $\mathcal C^\infty$ function.
\end{corollary}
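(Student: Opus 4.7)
The plan is to apply Proposition~\ref{prop:co-meagre} once for each $n\in\N$ (after checking that $-n\vr$ has a $\mathcal C^\infty$ extension to an open neighbourhood of $[0,1]$) and then to conclude by the Baire category theorem in the Banach space $\mathcal A$.

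First I would verify that $\vr$ admits a $\mathcal C^\infty$ continuation to an open neighbourhood of $[0,1]$. The classical fact that $e^{-1/x}$ and all its derivatives tend to $0$ as $x\to 0^+$ allows us to extend $\vr$ by $0$ on $(-\infty,0]$ while keeping it $\mathcal C^\infty$; on the right of $1$, one simply multiplies by a smooth bump, or extends analytically. Hence $-n\vr$ admits a $\mathcal C^\infty$ continuation to an open neighbourhood of $[0,1]$ for every $n\in\N$.

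Next, for each $n\in\N$ I would apply Proposition~\ref{prop:co-meagre} with $h=-n\vr$. This yields that the set
\[
\mathcal E_n \;:=\; \mathcal A\cap(-n\vr+\mathcal S)\;=\;\{\,f\in\mathcal A : f+n\vr\in\mathcal S\,\}
\]
is co-meagre in $\mathcal A$ (the case $n=0$ is included, using $h=0$). Since $\mathcal A$ is a Banach space, it is a Baire space, so the countable intersection $\bigcap_{n\in\N}\mathcal E_n$ is still co-meagre, and in particular non-empty. Any $f$ picked in this intersection satisfies $f_n=f+n\vr\in\mathcal S$ for every $n\in\N$, which is exactly the conclusion of the corollary.

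The only mildly delicate step is the smooth-extension point for $\vr$; the rest is a direct application of Proposition~\ref{prop:co-meagre} combined with Baire category. There is no real obstacle beyond observing that the family $\{-n\vr\}_{n\in\N}$ is countable, so countably many applications of Baire suffice.
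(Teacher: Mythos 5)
Your proposal is correct and follows essentially the same route as the paper's own proof: apply Proposition~\ref{prop:co-meagre} with $h=-n\varepsilon$ for each $n$, intersect the countably many co-meagre sets, and invoke the Baire category theorem to get a point in the intersection. The only addition is your explicit check that $-n\varepsilon$ extends to a $\mathcal C^{\infty}$ function on a neighbourhood of $[0,1]$, which the paper leaves implicit but which is indeed the hypothesis needed to apply the proposition.
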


\begin{proof} The proof is straightforward. For each $n\in \N$, $\mathcal A \cap (-n\varepsilon +\mathcal S)$ is co-meagre in $\mathcal A$. But a countable intersection of co-meagre sets is also co-meagre. Therefore $ \mathcal A\ \cap\  \bigcap_{n\in \N} (-n\varepsilon +\mathcal S)$ is co-meagre in $\mathcal A$. In particular Baire Categoricity Theorem implies that  $ \mathcal A\ \cap\  \bigcap_{n\in \N} (-n\varepsilon +\mathcal S)$ is non-empty.

Let $f$ be in $ \mathcal A\ \cap\  \bigcap_{n\in \N} (-n\varepsilon +\mathcal S)$; then for each $n\in \N$, $f_n:x\mapsto f(x)+n\varepsilon(x)$ is strongly transcendental on $[0,1]$.
\end{proof}

\begin{theorem}
\label{theo:max}
There is a family $(\mathcal F_n)_{n\in \N}$ of collections $\mathcal F_n$ of functions definable in $\R _{\mathrm{an},\exp}$, such that, 
\begin{itemize}
\item for each $n$, the structure $\R_{\mathcal F_n}:=(\R;\leq,+,\cdot \ ,(g)_{g\in \mathcal F_n})$ is a maximal polynomially bounded reduct of $\R _{\mathrm{an},\exp}$  (in the sense of definability) and 
\item for each $n_1\neq n_2$, the structure $\R_{\mathcal F_{n_1}}$ and $\R_{\mathcal F_{n_2}}$ do not define the same sets.
\end{itemize}
\end{theorem}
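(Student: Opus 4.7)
My plan is to combine Corollary~\ref{cor:plenty} with a Zorn's lemma argument, using the translates $f_n = f + n\varepsilon$ as separators between the eventual maximal reducts. First, Corollary~\ref{cor:plenty} provides a single $f \in \mathcal A$ such that, for every $n \in \N$, the function $f_n(x) := f(x) + n\varepsilon(x)$ is a strongly transcendental restricted $\mathcal C^\infty$ function. Each such $f_n$ is definable in $\R_{\mathrm{an},\exp}$ (since $f$ is restricted analytic and $\varepsilon(x) = e^{-1/x}$ on $(0,1]$ is built from the exponential), and by \cite[Theorem 1.2]{lG10} the structure $\R_{f_n}$ is o-minimal and polynomially bounded; hence $\R_{f_n}$ is a polynomially bounded reduct of $\R_{\mathrm{an},\exp}$ in the sense of definability.

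Next I construct $\R_{\mathcal F_n}$ via Zorn's lemma applied to the family $\mathcal P_n$ of polynomially bounded reducts, in the sense of definability, of $\R_{\mathrm{an},\exp}$ that are expansions of $\R_{f_n}$, partially ordered by the reduct relation. The key verification is that every chain in $\mathcal P_n$ admits an upper bound: one forms the structure whose collection of definable sets is the union over the chain. Because each first-order formula uses only finitely many function symbols and parameters, every set definable in this union is already definable in some single member of the chain. Thus the union is o-minimal (any definable subset of $\R$ it produces is a finite union of points and intervals, being such in some chain element) and, by the Growth Dichotomy Theorem \cite{Mi94a}, polynomially bounded (if it defined $\exp$, some chain element would too, contradicting polynomial boundedness there). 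Taking $\mathcal F_n$ to be the collection of functions definable in the resulting maximal element gives the desired $\R_{\mathcal F_n}$.

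Finally, for pairwise distinctness, suppose toward a contradiction that $\R_{\mathcal F_{n_1}}$ and $\R_{\mathcal F_{n_2}}$ define the same sets for some $n_1 \neq n_2$. Since $f_{n_i}$ is definable in $\R_{\mathcal F_{n_i}}$ (being a primitive), both $f_{n_1}$ and $f_{n_2}$ lie in the common collection of definable functions; hence so does their difference $(n_2-n_1)\varepsilon$, and therefore so does $\varepsilon$ itself. Composing with $x \mapsto 1/x$ and taking reciprocals, the function $y \mapsto 1/\varepsilon(1/y) = e^{y}$ is then definable in $\R_{\mathcal F_{n_1}}$ on $[1,+\infty)$, which contradicts the polynomial boundedness of $\R_{\mathcal F_{n_1}}$. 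The main obstacle of the proof is precisely the Zorn-chain verification: one must confirm that neither o-minimality nor absence of $\exp$ can be created by taking the union of a chain, both of which reduce to the finitary nature of first-order formulas together with the Growth Dichotomy.
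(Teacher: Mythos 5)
Your proof is correct and follows essentially the same route as the paper: obtain the $f_n$ from Corollary~\ref{cor:plenty}, complete $\{f_n\}$ to a maximal polynomially bounded reduct by Zorn's lemma, and separate the resulting structures via $f_{n_2}-f_{n_1}=(n_2-n_1)\varepsilon$. You actually supply two details the paper leaves implicit --- the chain-union verification and the step from definability of the bounded function $\varepsilon$ to definability of $\exp$ on $[1,+\infty)$ (which is what genuinely contradicts polynomial boundedness) --- both of which are handled correctly.
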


\begin{proof}
For each fixed $n_0$, note that $f_{n_0}$ is definable in $\R _{\mathrm{an},\exp}$ and complete the singleton $\{ f_{n_0} \}$ to get a maximal set $\mathcal F_{n_0}$ of functions definable in $\R _{\mathrm{an},\exp}$ such that the structure $\R_{\mathcal F_{n_0}}:=(\R;\leq,+,\cdot \ ,(g)_{g\in \mathcal F_{n_0}})$ is polynomially bounded. 

The first conclusion of the Corollary is clearly satisfied. 

For the second conclusion of the Corollary, suppose $\R_{\mathcal F_{n_1}}$ defines $f_{n_2}$ with $n_1\neq n_2$, then $\R_{\mathcal F_{n_1}}$ defines $f_{n_2}-f_{n_1}=(n_2-n_1)\varepsilon$, contradicting the polynomial boundedness. 
\end{proof}

\begin{remark}
Note that given $n\in \N \setminus \{ 0 \}$ and $f_n$ as in Corollary \ref{cor:plenty}, the structure $\R_{\mathrm{an}, f_n}$ (obtained by expanding the restricted analytic field by the function $f_n$) defines the exponential: we have produced infinitely many polynomially bounded reducts of $\R_{\mathrm{an},\exp}$ but \emph{none of them} is an expansion of $\R_{\mathrm{an}}$ (all this in the sense of definability). If van den Dries and Miller's conjecture were to be proven true, it would follow that $\R_{\mathrm{an},\mathrm{Pow}}$ is the \emph{unique} maximal pollynomially bounded reduct of  $\R_{\mathrm{an},\exp}$ that expands $\R_{\mathrm{an}}$ (all this in the sense of definability): if $\R_\mathcal F$ is a maximal polynomially bounded reduct of  $\R_{\mathrm{an},\exp}$ that expands $\R_{\mathrm{an}}$ (in the sense of definability), then, by \cite[Result 3.2]{Mi94} and maximality, $\R_\mathcal F$ defines all power functions and is therefore an expansion, in the sense of definability, of $\R_{\mathrm{an},\mathrm{Pow}}$. 

Note also that the presentation of each $\R_{\mathcal F_n}$ is, in a double way, not constructive; firstly, the existence of a function $f$ as in Corollary \ref{cor:plenty} is a non-constructive result; secondly, once $f$ is chosen, the existence of each collection $\mathcal F_n$ is also given in a non-constructive way. This raises questions about elementary equivalence or isomorphism (in a certain sub-language $\mathcal L$ of $\mathcal L_{\mathrm{an},\exp}$ (conjecturally $\mathcal L_{\mathrm{an},\mathrm{Pow}}$)) of these structures, each seen as a reduct to the language $\mathcal L$ of an $\mathcal L_{\mathrm{an},\exp}$-structure over $\R$, bi-interpretable with the standard $\R_{\mathrm{an},\exp}$ (in the spirit of \cite[Theorem 2.1]{PiNe91}). 
\end{remark}

\bibliography{bibserge}

\begin{thebibliography}{10}

\bibitem{Gab}
Andrei Gabrielov.
\newblock Complements of subanalytic sets and existential formulas for analytic
  functions.
\newblock {\em Invent. Math.}, 125(1):1--12, 1996.

\bibitem{KarMac99}
Marek Karpinski and Angus Macintyre.
\newblock A generalization of {W}ilkie's theorem of the complement, and an
  application to {P}faffian closure.
\newblock {\em Selecta Math. (N.S.)}, 5(4):507--516, 1999.

\bibitem{KuhKuh}
Franz-Viktor Kuhlmann and Salma Kuhlmann.
\newblock Valuation theory of exponential {H}ardy fields. {I}.
\newblock {\em Math. Z.}, 243(4):671--688, 2003.

\bibitem{lG10}
Olivier Le~Gal.
\newblock A generic condition implying o-minimality for restricted
  {$C^\infty$}-functions.
\newblock {\em Ann. Fac. Sci. Toulouse Math. (6)}, 19(3-4):479--492, 2010.

\bibitem{Mi94b}
Chris Miller.
\newblock Expansions of the real field with power functions.
\newblock {\em Ann. Pure Appl. Logic}, 68(1):79--94, 1994.

\bibitem{Mi94}
Chris Miller.
\newblock Expansions of the real field with power functions.
\newblock {\em Ann. Pure Appl. Logic}, 68(1):79--94, 1994.

\bibitem{Mi94a}
Chris Miller.
\newblock Exponentiation is hard to avoid.
\newblock {\em Proc. Amer. Math. Soc.}, 122(1):257--259, 1994.

\bibitem{PiNe91}
Ali Nesin and Anand Pillay.
\newblock Some model theory of compact {L}ie groups.
\newblock {\em Trans. Amer. Math. Soc.}, 326(1):453--463, 1991.

\bibitem{bookPoiz00}
Bruno Poizat.
\newblock {\em A course in model theory}.
\newblock Universitext. Springer-Verlag, New York, 2000.
\newblock An introduction to contemporary mathematical logic, Translated from
  the French by Moses Klein and revised by the author.

\bibitem{Ra1}
Serge Randriambololona.
\newblock o-minimal structures: low arity versus generation.
\newblock {\em Illinois J. Math.}, 49(2):547--558 (electronic), 2005.

\bibitem{So02}
R{\'e}mi Soufflet.
\newblock Asymptotic expansions of logarithmic-exponential functions.
\newblock {\em Bull. Braz. Math. Soc. (N.S.)}, 33(1):125--146, 2002.

\bibitem{vdD98}
Lou van~den Dries.
\newblock Dense pairs of o-minimal structures.
\newblock {\em Fund. Math.}, 157(1):61--78, 1998.

\bibitem{bookvdD98}
Lou van~den Dries.
\newblock {\em Tame topology and o-minimal structures}, volume 248 of {\em
  London Mathematical Society Lecture Note Series}.
\newblock Cambridge University Press, Cambridge, 1998.

\bibitem{vdDMacMar94}
Lou van~den Dries, Angus Macintyre, and David Marker.
\newblock The elementary theory of restricted analytic fields with
  exponentiation.
\newblock {\em Ann. of Math. (2)}, 140(1):183--205, 1994.

\bibitem{vdDMi94}
Lou van~den Dries and Chris Miller.
\newblock On the real exponential field with restricted analytic functions.
\newblock {\em Israel J. Math.}, 85(1-3):19--56, 1994.

\bibitem{vdDMi96}
Lou van~den Dries and Chris Miller.
\newblock Geometric categories and o-minimal structures.
\newblock {\em Duke Math. J.}, 84(2):497--540, 1996.

\end{thebibliography}
\bibliographystyle{plain}

\end{document}